\newcommand{\der}{\nabla}
\newenvironment{proof}{\noindent {\bf Proof} }{\endprf\par}
\def \endprf{\hfill  {\vrule height6pt width6pt depth0pt}\medskip}
\def\emph#1{{\it #1}}
\def\textbf#1{{\bf #1}}
\newcommand{\bea}{\begin{eqnarray}}
\newcommand{\eea}{\end{eqnarray}}
\def\beaa{\begin{eqnarray*}}
\def\eeaa{\end{eqnarray*}}
\def\ba{\begin{array}}
\def\ea{\end{array}}
\def\be#1{\begin{equation} \label{#1}}
\def \eeq{\end{equation}}
\def\a{{\alpha}}
\def\b{{\beta}}
\def\ga{\gamma}
\def\de{\delta}
\def\om{\omega}
\def\Om{\Omega}
\def\th{\theta}
\def\ze{\zeta}
\def\thh{\hat{\th}}
\def\lap{\Delta}
\begin{document}
\theoremstyle{plain}
  \newtheorem{theorem}[subsection]{Theorem}
  \newtheorem{conjecture}[subsection]{Conjecture}
  \newtheorem{proposition}[subsection]{Proposition}
  \newtheorem{lemma}[subsection]{Lemma}
  \newtheorem{corollary}[subsection]{Corollary}

\theoremstyle{remark}
  \newtheorem{remark}[subsection]{Remark}
  \newtheorem{remarks}[subsection]{Remarks}

\theoremstyle{definition}
  \newtheorem{definition}[subsection]{Definition}

\include{psfig}
\title[Quasi-Geostrophic Equation with Force]{Global Regularity for the Critical 2-D Dissipative Quasi-Geostrophic Equation with Force}
\author{Sari Ghanem}
\address{Institut de Math\'ematiques de Jussieu, Universit\'e Paris Diderot - Paris VII, 75205 Paris Cedex 13, France}
\email{ ghanem@@math.jussieu.fr}
\maketitle
\begin{abstract}
This is a remark that by using an adaptation of the technique invented by A. Kiselev, F. Nazarov, and A. Voldberg, with a modified scaling argument, we can prove global regularity of the critical 2-D dissipative quasi-geostrophic equation with smooth periodic force, under the assumption that the initial data is smooth and periodic, and the force is $\a$-H\"older continuous in space, $\a$ $>$ 0.
\end{abstract}

\section{Introduction}

The problem of breakdown of solutions of the critical quasi-geostrophic equation with arbitrary smooth initial data was suggested by S. Klainerman in [Kl] as one of the most challenging problems in partial differential equations of the twenty-first century. In an elegant paper, [KNV], A. Kiselev, F. Nazarov and A. Voldberg proved global well-posdness of the critical 2-dimensional dissipative quasi-geostrophic equation with smooth periodic initial data. This note is a remark that by using an adaptation of the technique introduced by Kiselev, Nazarov and Voldberg in [KNV], with a modified scaling argument, we can immediately prove global regularity of the critical 2-dimensional dissipative quasi-geostrophic equation with smooth periodic force, under the assumption that the initial data is smooth and periodic, and the force $\a$-H\"older continuous in space, $\a > 0$.

\subsection{The statement}\

We consider the critical surface quasi-geostrophic equation with force, which we will write as the following:
\bea
\partial_{t}\th(x,t) = u.\der\th(x,t) - (-\lap)^{\frac{1}{2}}\th(x,t) + f(x,t)  \label{1.1}
\eea
where $x \in R^{2}$, $u(x,t) = (-R_{2}\th, R_{1}\th)$, where $R_{1}$ and $R_{2}$ are the usual Riesz transforms in $R^{2}$, $\th(x,t) : R^{2}.R \to R$ is a scalar function, and $f(x,t) : R^{2}.R \to R$ is the force function.

We assume $f$ smooth and periodic on $R^{2}$ (in space), and bounded in space and time, i.e.
\bea
||f(x,t)||_{L^{\infty}} < \infty   \label{assumptionone}
\eea

We also assume $f$ to be $\a$-H\"older continuous with $\a > 0$, i.e. there exist constants $C_{1} \geq 0$ and $\a > 0$ which do not depend on $t$, such that for all $x, y$ in $R^{2}$,
\bea
|f(x,t) - f(y,t)| \leq C_{1}|x - y|^{\a}   \label{assumptiontwo}
\eea

The goal of section \eqref{part I} is to prove the following theorem,

\begin{theorem} \label{th}
Local solutions of the critical surface dissipative quasi-geostrophic equation with smooth periodic force, \eqref{1.1}, with smooth periodic initial data, can be extended to global solutions in time under assumptions \eqref{assumptionone} and \eqref{assumptiontwo} on the force.
\end{theorem}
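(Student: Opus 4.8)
The plan is to run the modulus-of-continuity method of [KNV], supplemented by (i) a forced maximum principle controlling $\|\theta(\cdot,t)\|_{L^\infty}$ on any finite time interval and (ii) a space-time rescaling that does double duty — it reduces arbitrary smooth periodic data to data obeying a fixed modulus, exactly as in [KNV], \emph{and} it makes the force small. Write $F:=\sup_{x,t}|f(x,t)|$, finite by \eqref{assumptionone}. First, evaluating \eqref{1.1} at a point where $\theta(\cdot,t)$ attains its spatial maximum, the transport term $u\cdot\der\theta$ vanishes there (since $\der\theta=0$ there), $(-\lap)^{1/2}\theta\ge 0$ there, and $f\le F$, so $\tfrac{d}{dt}\max_x\theta(\cdot,t)\le F$; likewise at the minimum. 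Hence $\|\theta(\cdot,t)\|_{L^\infty}\le \|\theta_0\|_{L^\infty}+tF$ as long as the smooth solution exists. By the local existence theory a solution extends globally once $\|\der\theta(\cdot,t)\|_{L^\infty}$ is bounded on $[0,T^\ast)$, $T^\ast$ the maximal time; so suppose $T^\ast<\infty$ for contradiction and set $B:=\|\theta_0\|_{L^\infty}+T^\ast F$, whence $\|\theta(\cdot,t)\|_{L^\infty}\le B$ on $[0,T^\ast)$.

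Now rescale: for $\lambda>0$, $\tilde\theta(x,t):=\theta(x/\lambda,t/\lambda)$ solves \eqref{1.1} on $[0,\lambda T^\ast)$ with initial data $\theta_0(x/\lambda)$ and force $\tilde f(x,t):=\lambda^{-1}f(x/\lambda,t/\lambda)$, because the Riesz transforms are scale invariant and $(-\lap)^{1/2}$ has scaling degree one. Then $\|\der\tilde\theta_0\|_{L^\infty}=\lambda^{-1}\|\der\theta_0\|_{L^\infty}$ and, by \eqref{assumptiontwo}, $|\tilde f(x,t)-\tilde f(y,t)|\le\lambda^{-1-\a}C_1|x-y|^{\a}$, while — this is the point — the amplitude bound $\|\tilde\theta(\cdot,t)\|_{L^\infty}\le B$ is \emph{unaffected} by the rescaling. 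Next fix a [KNV]-type concave, unbounded modulus $\omega$ which near $0$ equals $\xi-\xi^{1+\b}$ with $\b:=\tfrac12\min(\a,1)\in(0,1)$, so $\omega'(0^+)=1$ and $\omega''(0^+)=-\infty$; by the [KNV] computations (the Riesz-transform lemma bounding $u$'s modulus by some $\Omega$, plus their dissipation estimate) the transport and dissipative contributions to $\tfrac{d}{dt}[\tilde\theta(x,t)-\tilde\theta(y,t)]$ at a breakthrough pair at distance $\xi$ sum to $-m(\xi)$ with $m(\xi)>0$ for all $\xi>0$ and $m(\xi)\gtrsim \xi^{\b}$ as $\xi\to0^+$. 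Put $\xi_{\max}:=\omega^{-1}(2B)$; since $\b<\a$, the supremum $S:=\sup_{0<\xi\le\xi_{\max}}\xi^{\a}/m(\xi)$ is finite. Choose $\lambda$ so large that $\lambda^{1+\a}>C_1S$ and, simultaneously, $\tilde\theta_0$ obeys $\omega$ — possible because $\|\der\tilde\theta_0\|_{L^\infty}=\lambda^{-1}\|\der\theta_0\|_{L^\infty}\to0$, $\|\tilde\theta_0\|_{L^\infty}=\|\theta_0\|_{L^\infty}$ is fixed, and $\omega$ is unbounded.

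Then I claim $\tilde\theta(\cdot,t)$ obeys $\omega$ for all $t\in[0,\lambda T^\ast)$. If not, let $t_1$ be the first breakthrough time. As in [KNV], $\omega''(0^+)=-\infty$ together with the fact that $\tilde\theta(\cdot,t_1)$ still obeys $\omega$ rules out a breakthrough at scale $0$, and the amplitude bound $\tilde\theta(x,t_1)-\tilde\theta(y,t_1)\le2B$ forces the breakthrough distance $\xi_0$ into $(0,\xi_{\max}]$; thus at a breakthrough pair $x\ne y$, $|x-y|=\xi_0$,
\[ 0\ \le\ \tfrac{d}{dt}\big[\tilde\theta(x,t)-\tilde\theta(y,t)\big]\big|_{t_1}\ \le\ -m(\xi_0)+\big(\tilde f(x,t_1)-\tilde f(y,t_1)\big)\ \le\ -m(\xi_0)+\lambda^{-1-\a}C_1\,\xi_0^{\a}\ <\ 0 \]
since $\lambda^{-1-\a}C_1\xi_0^{\a}\le\lambda^{-1-\a}C_1S\,m(\xi_0)<m(\xi_0)$. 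This contradiction proves the claim, so $\|\der\tilde\theta(\cdot,t)\|_{L^\infty}\le\omega'(0^+)=1$ on $[0,\lambda T^\ast)$, hence $\|\der\theta(\cdot,s)\|_{L^\infty}=\lambda\|\der\tilde\theta(\cdot,\lambda s)\|_{L^\infty}\le\lambda$ on $[0,T^\ast)$, contradicting the continuation criterion. Therefore $T^\ast=\infty$ and the solution is global.

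The only term not present in [KNV] is the force term $\tilde f(x,t_1)-\tilde f(y,t_1)$, so the entire issue is to arrange the scaling so that it is absorbed by the [KNV] margin $m$; the nontrivial observation is that one cannot do this by enlarging $m$ alone, since the relevant distances are $0<\xi\le\xi_{\max}$ and both $\xi_{\max}=\omega^{-1}(2B)$ and $m$ are invariant under the $(x,t)$-scaling, so shrinking the force by $\lambda^{-1-\a}$ does not reopen the problem at the upper end, while near $\xi=0$ the decay $m(\xi)\gtrsim\xi^{\b}$ outpaces the force's $\xi^{\a}$ precisely because $\b$ may be taken strictly below $\a$ — which is exactly where the hypothesis $\a>0$ enters. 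Everything else (the modulus $\Omega$ of $u$, the dissipation lower bound, and the scale-zero exclusion) is [KNV] without change.
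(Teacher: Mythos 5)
Your proposal is correct, and it shares the paper's central idea: rescale $(x,t)\mapsto(x/\lambda,t/\lambda)$ so that the force's H\"older seminorm shrinks like $\lambda^{-1-\alpha}$ while the KNV dissipation margin, which behaves like $\xi^{\beta}$ near $\xi=0$, is scale-invariant, and then absorb the force into that margin. Where you genuinely diverge is in the large-distance regime. The paper splits the breakthrough distances at $\xi=\delta$: below $\delta$ it plays the H\"older bound $C_{1}\xi^{\alpha}/A^{1+\alpha}$ against the $\xi^{\beta}$ dissipation margin, while for $\delta\le\xi\le A\cdot D$ ($D$ the period) it switches to the crude bound $2\|f\|_{L^{\infty}}/A$ and beats it with the margin $(\tfrac{1}{\pi}-B\gamma)\,\omega(\xi)/\xi$ evaluated at the worst case $\xi=A\cdot D$, exploiting the unboundedness of $\omega$; periodicity is what caps the relevant distances at $A\cdot D$. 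You instead cap the breakthrough distance at $\xi_{\max}=\omega^{-1}(2B)$ via a forced maximum principle $\|\theta(\cdot,t)\|_{L^{\infty}}\le\|\theta_{0}\|_{L^{\infty}}+tF$, and then use only the H\"older bound on the force over the whole range $(0,\xi_{\max}]$. Both mechanisms close the argument. The trade-off is quantitative: your $B$, hence $\xi_{\max}$, $S$, and ultimately $\lambda$, grow with the time horizon $T^{\ast}$, so you get a gradient bound depending on $T^{\ast}$ and must phrase the proof as a contradiction at a finite maximal time, whereas the paper's periodicity cap is time-independent and yields $\|\nabla\theta\|_{L^{\infty}}\le A$ uniformly in $t$, which is the stronger statement. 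A minor further difference: you take $\beta=\tfrac{1}{2}\min(\alpha,1)$ strictly below $\alpha$ so that $\xi^{\alpha}/m(\xi)\to 0$ at the origin, while the paper takes $\beta=\min(\tfrac{1}{2},\alpha)$ (possibly equal to $\alpha$) and instead wins by the prefactor $A^{-(1+\beta)}$; both choices are legitimate, and where your argument leans on the amplitude bound and the first-breakthrough formalism you are operating at the same level of rigor as the paper itself.
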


\begin{remark}
One can prove existence and uniqueness of local solutions of equation \eqref{1.1} under the assumptions of theorem \eqref{th}, by adapting the argument of J. Wu in [Wu]. Thus, theorem \eqref{th} gives global well-posdness for the 2-dimensional critical quasi-geostrophic equation with force on the torus satisfying \eqref{assumptionone} and \eqref{assumptiontwo}.
\end{remark}

\subsection{Strategy of the proof}\ \label{part 0} 

We will prove theorem \eqref{th} by proving that for $\th$ a solution of \eqref{1.1} with smooth periodic initial data $\th_{0}$, $||\der\th||_{L^{\infty}}$ is bounded by a constant depending on $||f||_{L^{\infty}}$, on $C_{1}$ and $\a$ as defined in \eqref{assumptiontwo}, on $||\der\th_{0}||_{L^{\infty}}$, and on the period of $\th_{0}$ and $f$. Once this is achieved, one can show that local solutions can be extended to global solutions in time by adapting the argument shown by A. Kiselev in [K]. To prove such an estimate on $||\der\th||_{L^{\infty}}$ we will use the method of modulus of continuity of A. Kiselev, F. Nazarov, and A. Volberg in [KNV], with a modified scaling argument.

\begin{definition} \label{definitionofmodulusofcontinuity}
We say that a function $\omega$ is a modulus of continuity if $\omega : R_{+} \to R_{+}$ is increasing, continuous, concave, and $\omega(0) = 0$.
\end{definition}

\begin{definition}
We say that $\th$ has modulus of continuity $\omega$, or $\om$ is preserved by $\th$, at time $t$, if for all $x, y \in R^{2}$,
\bea
|\th(x,t) - \th(y,t)| \leq \omega(|x - y|) \label{definitionomegamodulusofcontinuityfortheta}
\eea
\end{definition}

Observe now that if at time $t$, $\th$ has $\omega$ as modulus of continuity, then
\beaa
\frac{|\th(x+h,t) - \th(x,t)|}{|h|} \leq \frac{\omega(|h|)}{|h|}
\eeaa

By taking the limit when $|h| \to 0$ in the above inequality, we obtain for all $x \in R^{2}$
\beaa
|\der\th(x,t)| \leq \omega^{'}(0)
\eeaa

Therefore, by taking the supremum in space in the above inequality, we get that
\bea
||\der\th(x,t)||_{L^{\infty}} \leq \omega^{'}(0)
\eea

Consequently, if we manage to find one special function $\omega$, modulus of continuity, such that given $A$ large enough depending on $||\der\th_{0}||_{L^{\infty}}$, where $\th_{0}$ is the initial data, on $||f||_{L^{\infty}}$, and on the period of $\th_{0}$ and of $f$, such that
\bea
\omega_{A}(\ze) = \omega(A\ze)
\eea
is a modulus of continuity for $\th_{0}$, and $\omega_{A}$ remains preserved for all time $t$ by $\th$, a smooth solution of \eqref{1.1} with $\th_{0}$ as initial data, in the sense of \eqref{definitionomegamodulusofcontinuityfortheta}, then
\bea
||\der\th(x,t)||_{L^{\infty}} \leq A.\omega^{'}(0)  \label{desiredestimate}
\eea

Let's look for such $\omega$:

If
\bea
\omega^{'}(0) = 1  \label{assumptiononomegaderivativeequaltoone}
\eea
 and
\bea
\lim_{\ze \to \infty}  \omega(\ze) = \infty  \label{assumptiononomegagoestoinfinityatinfinity}
\eea
then we notice that since any smooth periodic function $\th_{0}$ is bounded, we can choose $A > 0$ large enough such that $\th_{0}$ has $\omega_{A}(\ze) = \omega(A\ze)$ as modulus of continuity, with $A$ depending on $||\der \th_{0}||_{L^{\infty}}$ and on the period of $\th_{0}$.\

If we also impose on $\om$ to have
\beaa
\lim_{ \ze \to 0^{+}} \omega^{''}(\ze) = -\infty
\eeaa
then, since $\th$ is smooth because $\th_{0}$ and $f$ are smooth, the only way for $\om$ to stop being a modulus of continuity for $\th$ after some time is that there exists a time $T$, and $x, y \in R^{2}$, $x \neq y$, such that
\bea
\th(x,T) - \th(y,T) = \omega_{A}(|x - y|)  \label{limitofomegaceasingtobemodulusofcontinuity}
\eea
and
\bea
\partial_{t}(\th(x,T) - \th(y,T)) \geq 0 \label{thedifferenceincreasingintime}
\eea

Hence, we are going to look for $\omega$ verifying \eqref{assumptiononomegaderivativeequaltoone} and \eqref{assumptiononomegagoestoinfinityatinfinity} such that
\bea
\omega^{''}(0) = - \infty   \label{assumptiononomegasecondderivativeequaltonegativeinfinity}
\eea
and such that at $x, y \in R^{2}$ where \eqref{limitofomegaceasingtobemodulusofcontinuity} is verified, we have
\bea
\partial_{t}(\th(x, T) - \th(y, T)) < 0 \label{differencestrictlydecreasingintime}
\eea

Because of \eqref{thedifferenceincreasingintime}, inequality \eqref{differencestrictlydecreasingintime} will prove that $\omega_{A}$ is preserved by $\th$ for all time $t$, and consequently we will have our estimate.

\textbf{Acknowledgments.} The author would like to thank his advisors, Fr\'ed\'eric H\'elein and Vincent Moncrief, for their continuous advice and support during his PhD studies. The author would also like to thank Alexander Kiselev for suggesting the problem as an exercise, yet the remark presented in this note that consists in modifying the scaling argument in his original technique with F. Nazarov and A. Voldberg, to get the hereby stated result, was not apparent to him, hence the author's interest in posting it. This work was supported by a full tuition fellowship from Universit\'e Paris VII - Institut de Math\'ematiques de Jussieu.\\

\section{Estimate for $||\der\th(x,t)||_{L^{\infty}}$} \label{part I}

Let $\om$ a modulus of continuity, in the sense of \eqref{definitionofmodulusofcontinuity}, such that,
\bea
\lim_{\ze \to \infty} \omega(\ze) &=& \infty  \label{prematureassumptiononomegaone}\\ 
\omega^{'}(0) &=& 1  \label{prematureassumptiononomegatwo}\\
\omega^{''}(0) &=& - \infty \label{thirdassumptiononomega}
\eea

Given an arbitrary smooth periodic initial data $\th_{0}$, since it is a $C^{1}$ function on a compact, we can choose $A$ large enough depending on $||\der\th_{0}||_{L^{\infty}}$ and the period of $\th_{0}$, such that $\th_{0}$ has $\omega_{A}$ as modulus of continuity, i.e. for all $x, y \in R^{2}$, we have
\bea
|\th_{0}(x) - \th_{0}(y)| \leq  \omega_{A}(|x - y|)  \label{inequalityforthechoiceofA}
\eea
This gives for all $x, y \in R^{2}$
\bea
|\th_{0}(\frac{x}{A}) - \th_{0}(\frac{y}{A}) | \leq \omega(|x - y|)
\eea

\begin{definition} \
Let $A$ such that we have \eqref{inequalityforthechoiceofA}, we define
\bea
\hat{\th}(x,t) = \th(\frac{x}{A}, \frac{t}{A})  \label{definitionofthetahat}
\eea
\end{definition}

If $\th(x, t)$ solves \eqref{1.1}, then $\thh(x,t)$ satisfies\

\bea
\partial_{t}\thh(x,t) = \hat{u}.\der\thh(x,t) - (-\lap)^{\frac{1}{2}}\thh(x,t) + \frac{1}{A}f(\frac{x}{A},\frac{t}{A})  \label{fieldequationsatisfiedbyomegahat}
\eea

We would want to find $\om$ preserved by $\thh$ for all time $t$. For this, we will proceed as explained in \eqref{part 0}: 

Let $x, y \in R^{2}$, $x \neq y$, be such that $\thh$ has $\om$ as modulus of continuity for all time $t \leq T$, and
\bea
\thh(x, T) - \thh(y, T) = \omega(|x - y|)  \label{xyTsuchthattheequalityholds}
\eea
Let
\bea
\ze = |x - y|   \label{definitionofze}
\eea

As explained in \eqref{part 0}, we want to find $\om$ such that for $x, y \in R^{2}$ as in \eqref{xyTsuchthattheequalityholds}, we have
\bea
\partial_{t}(\thh(x, T) - \thh(y, T)) < 0 \label{differencethetahatstrictlynegative}
\eea

This will give that $\om$ is preserved by $\thh$ for all time $t$, and consequently $\om_{A}$ is preserved by $\th$ for all time, and therefore we will have our desired estimate \eqref{desiredestimate}.

Computing,
\bea
\notag
&&  \partial_{t}(\thh(x,T) - \thh(y,T)) \\
\notag
&=&  \hat{u}.\der\thh(x,T) - \hat{u}.\der\thh(y,T) - [ (-\lap)^{\frac{1}{2}}\thh(x,T) - (-\lap)^{\frac{1}{2}}\thh(y,T) ]\\
&&+ \frac{1}{A}f(\frac{x}{A},\frac{T}{A} ) - \frac{1}{A}f(\frac{y}{A},\frac{T}{A})   \label{computingthedifferenceoftimederivativesofthetahat}
\eea

\begin{lemma} \label{lemma2.1}
If the function $\thh$ has modulus of continuity $\om$, then $\hat{u} = (-R_{2}\thh, R_{1}\thh)$ has modulus of continuity $\Omega(\ze)$, where
\bea
\Omega(\ze) = B(\int_{0}^{\ze}\frac{\om(\eta)}{\eta}d\eta + \ze\int_{\ze}^{\infty}\frac{\om(\eta)}{\eta^{2}}d\eta) \label{modulusofcontinuityforuhat}
\eea
with some universal constant $B > 0$.
\end{lemma}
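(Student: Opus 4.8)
The plan is to estimate the velocity $\hat u = (-R_2\thh, R_1\thh)$ pointwise by exploiting the fact that the Riesz transforms in $R^2$ are given by a convolution with a kernel homogeneous of degree $-2$. Fix $x,y\in R^2$ with $\ze = |x-y|$; I want to bound $|\hat u(x) - \hat u(y)|$. Writing $\hat u(x) - \hat u(y) = \int_{R^2} (K(x-z) - K(y-z))\,\thh(z)\,dz$ where $K(z) = c\,z^\perp/|z|^2$ (the precise constant going into the universal $B$), the first move is to subtract a constant from $\thh$: since $K$ has mean zero on spheres and is only borderline integrable, I replace $\thh(z)$ by $\thh(z) - \thh(\frac{x+y}{2})$, or more conveniently split the integral around the midpoint $m = \frac{x+y}{2}$ and use that $\int_{|z-m|=r} K(x-z)-K(y-z)\,d\sigma(z)$ has the right cancellation so that only the modulus of continuity $\om(|z-m|)$ enters.

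The key step is then the geometric decomposition of $R^2$ into a near region and a far region relative to the scale $\ze$. In the near region, say $|z - m| \lesssim \ze$, one uses the crude bound $|K(x-z)| + |K(y-z)| \lesssim |z-x|^{-2} + |z-y|^{-2}$ together with $|\thh(z) - \thh(m)| \le \om(|z-m|) \lesssim \om(|z-x|) + \om(\ze)$ and integrates in polar coordinates; because $\om$ is concave with $\om(0)=0$ the function $\om(\eta)/\eta$ is decreasing, and $\int_0^{c\ze} \frac{\om(\eta)}{\eta}\,d\eta$ comes out, which after adjusting constants is controlled by $\int_0^\ze \frac{\om(\eta)}{\eta}\,d\eta$. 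In the far region, $|z-m| \gtrsim \ze$, one uses the mean value form of the kernel difference, $|K(x-z) - K(y-z)| \lesssim \ze\,|z-m|^{-3}$ (valid once $|z-m|$ is a fixed multiple of $\ze$), against $|\thh(z) - \thh(m)| \le \om(|z-m|)$, and integration in polar coordinates produces $\ze\int_\ze^\infty \frac{\om(\eta)}{\eta^2}\,d\eta$. Adding the two contributions and absorbing all numerical factors into a single universal constant $B$ gives exactly $\Omega(\ze)$ as in \eqref{modulusofcontinuityforuhat}. Since this argument only ever used $|x-y| = \ze$ and the modulus of continuity of $\thh$, it shows $|\hat u(x) - \hat u(y)| \le \Omega(|x-y|)$ for all $x,y$, i.e. $\Omega$ is a modulus of continuity for $\hat u$.

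The main obstacle is handling the borderline (logarithmically divergent) behavior of the Riesz kernel at the scale separating the near and far regions: one must be careful that the cancellation coming from subtracting $\thh(m)$ is genuinely available — this requires either a principal-value interpretation or a symmetric splitting of the singular integral — and that the two pieces match up at $|z-m| \sim \ze$ without losing the estimate. A secondary technical point is to verify that $\Omega$ is itself a legitimate modulus of continuity in the sense of \eqref{definitionofmodulusofcontinuity} (increasing, continuous, concave, vanishing at $0$), or at least that it dominates one; concavity of $\Omega$ is inherited from concavity of $\om$ by a direct computation of $\Omega'$ and $\Omega''$, and the two integrals are finite for every $\ze>0$ precisely because $\om(\eta)/\eta$ is decreasing and $\om$ grows sublinearly. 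I would remark that this lemma is essentially the one proved in [KNV]; the force term plays no role here, so the proof carries over verbatim.
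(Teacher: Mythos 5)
Your argument is correct and is precisely the near/far decomposition with kernel cancellation sketched in the Appendix of [KNV], which is all the paper itself offers here (its proof of Lemma \ref{lemma2.1} is just a citation to that appendix). One small slip: the kernel for $\hat{u}$ in $R^{2}$ is homogeneous of degree $-2$, i.e. $K(z) = c\,z^{\perp}/|z|^{3}$ rather than $c\,z^{\perp}/|z|^{2}$ as written, but your subsequent estimates (near region giving $\int_{0}^{\ze}\om(\eta)\eta^{-1}\,d\eta$, far region giving $\ze\int_{\ze}^{\infty}\om(\eta)\eta^{-2}\,d\eta$) all use the correct degree $-2$ homogeneity, so the conclusion stands.
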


\begin{proof}

A sketch of the proof of \eqref{lemma2.1} is in the Appendix of [KNV].

\end{proof}

\begin{lemma} \label{lemma2.2}
For $x, y$ and $T$ as in \eqref{xyTsuchthattheequalityholds}, and $\ze$ defined as in \eqref{definitionofze}, we have
\bea
\hat{u}.\der\thh(x,T) - \hat{u}.\der\thh(y,T) &\leq& \Om(\ze)\om^{'}(\ze)  \geq 0   \label{2.1}\\
\notag
- [ (-\lap)^{\frac{1}{2}}\thh(x,T) - (-\lap)^{\frac{1}{2}}\thh(y,T) ] &\leq& \frac{1}{\pi}\int_{0}^{\frac{\ze}{2}}\frac{\om(\ze + 2\eta) + \om(\ze - 2\eta) -2\om(\ze)}{\eta^{2}}d\eta \\
\notag
&& + \frac{1}{\pi}\int_{\frac{\ze}{2}}^{\infty}\frac{\om(2\eta + \ze) - \om(2\eta - \ze) -2\om(\ze)}{\eta^{2}}d\eta \\ 
&& \label{2.2} \\
\frac{1}{A} [ f(\frac{x}{A},\frac{T}{A}) - f(\frac{y}{A},\frac{T}{A}) ] &\leq&  \frac{C_{1}}{A^{1 + \a}}\ze^{\a}  \label{2.3}
\eea
for some $\a > 0$ and $C_{1} \geq 0$ as in \eqref{assumptiontwo}.
\end{lemma}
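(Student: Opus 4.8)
The plan is to establish the three inequalities \eqref{2.1}, \eqref{2.2}, \eqref{2.3} separately, since each concerns a different term in the decomposition \eqref{computingthedifferenceoftimederivativesofthetahat}, and all three estimates are local in character, depending only on the values of $\thh$ at the points $x,y$ with $|x-y|=\ze$ under the extremal condition \eqref{xyTsuchthattheequalityholds}. The only input we are allowed is that $\thh$ has modulus of continuity $\om$ for times $t\le T$, that equality is attained at $x,y,T$, and that $\hat u=(-R_2\thh,R_1\thh)$ inherits modulus of continuity $\Om$ by Lemma \eqref{lemma2.1}.

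For \eqref{2.1}, the first key observation is that under the extremal condition the function $z\mapsto \thh(z,T)$ has a critical behaviour along the segment $[y,x]$: since $\thh(x,T)-\thh(y,T)=\om(|x-y|)$ and $\thh$ never exceeds $\om$, the gradient $\der\thh$ at both $x$ and $y$ must point along the unit vector $e=(x-y)/|x-y|$, with $e\cdot\der\thh(x,T)=e\cdot\der\thh(y,T)=\om'(\ze)$ (one-sided derivative arguments: perturbing along $e$ cannot increase the difference beyond $\om$). Writing $\hat u\cdot\der\thh(x,T)-\hat u\cdot\der\thh(y,T)$ and inserting $\der\thh(x,T)=\om'(\ze)e+\perp$, $\der\thh(y,T)=\om'(\ze)e+\perp$, the parallel parts contribute $\om'(\ze)\,e\cdot(\hat u(x)-\hat u(y))$, and the perpendicular parts must be handled; bounding $|\hat u(x)-\hat u(y)|\le\Om(\ze)$ gives the claimed bound $\Om(\ze)\om'(\ze)$. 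The nonnegativity $\Om(\ze)\om'(\ze)\ge0$ is immediate since $\om$ is increasing (so $\om'\ge0$) and $\Om\ge0$ by its definition \eqref{modulusofcontinuityforuhat}. This is the step I expect to be the main obstacle: making the reduction to the gradient pointing along $e$ fully rigorous, and controlling the perpendicular components of $\der\thh$ — this is exactly the subtle point in [KNV], and one must be careful that the incompressibility $\der\cdot\hat u=0$ is what ultimately makes the bad term drop out or be absorbed.

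For \eqref{2.2}, the plan is to use the integral (singular-kernel) representation of $(-\lap)^{1/2}$ in $R^2$, namely $(-\lap)^{1/2}\thh(x)=c\,\mathrm{P.V.}\int_{R^2}\frac{\thh(x)-\thh(x+z)}{|z|^{3}}\,dz$ for an explicit constant $c$. Subtracting the representations at $x$ and $y$, exploiting the equality \eqref{xyTsuchthattheequalityholds} to write everything in terms of $\om(\ze)$, and then averaging the kernel over rotations / pairing up the points $x+z$ and $y+z$ symmetrically about the midpoint, one reduces the two-dimensional principal-value integral to the stated one-dimensional integrals in $\eta$; the split at $\eta=\ze/2$ comes from separating the region where both displaced points stay on the same side from the region where they straddle. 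Monotonicity and concavity of $\om$ ensure the integrands in \eqref{2.2} have the right sign (the first is $\le 0$ by concavity, the second is eventually negative because $\om$ is sublinear), so the integrals converge and the inequality is the correct bookkeeping of the dissipation; the constant $1/\pi$ is what results after carrying out the angular integration.

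Finally, \eqref{2.3} is the genuinely new ingredient of this note and is the easiest: it is a direct application of the H\"older hypothesis \eqref{assumptiontwo}. Since $f$ is $\a$-H\"older with constant $C_1$ uniformly in $t$, we have $|f(\tfrac xA,\tfrac TA)-f(\tfrac yA,\tfrac TA)|\le C_1|\tfrac xA-\tfrac yA|^{\a}=C_1 A^{-\a}|x-y|^{\a}=C_1 A^{-\a}\ze^{\a}$, and dividing by $A$ (the factor $1/A$ multiplying $f$ in \eqref{fieldequationsatisfiedbyomegahat}) yields $\tfrac1A[f(\tfrac xA,\tfrac TA)-f(\tfrac yA,\tfrac TA)]\le C_1 A^{-1-\a}\ze^{\a}$, which is exactly \eqref{2.3}. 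The point of the modified scaling $\thh(x,t)=\th(x/A,t/A)$ is precisely that the forcing term acquires the favourable prefactor $A^{-1-\a}$, which will let it be absorbed into the (negative) dissipation term at the next stage of the argument. Assembling \eqref{2.1}, \eqref{2.2}, \eqref{2.3} into \eqref{computingthedifferenceoftimederivativesofthetahat} then completes the proof of the lemma.
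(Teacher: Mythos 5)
Your treatment of \eqref{2.3} is identical to the paper's: a direct application of the H\"older hypothesis \eqref{assumptiontwo} together with the scaling factor $\frac{1}{A}$ from \eqref{fieldequationsatisfiedbyomegahat}, giving $\frac{C_{1}}{A^{1+\a}}\ze^{\a}$. For \eqref{2.2} both you and the paper simply defer to [KNV]; your sketch via the singular-integral representation of $(-\lap)^{\frac{1}{2}}$ is consistent with what is done there. The genuine divergence is in \eqref{2.1}. The paper never decomposes the gradients: it writes the left-hand side as $\frac{d}{dh}_{|h=0}[\thh(x+h\hat{u}(x),T)-\thh(y+h\hat{u}(y),T)]$, observes that for $h\geq 0$ the bracket is $\leq \om(\ze+h\Om(\ze))$ with equality at $h=0$ (by the extremality \eqref{xyTsuchthattheequalityholds} together with Lemma \eqref{lemma2.1} and the monotonicity of $\om$), and concludes by comparing one-sided derivatives at $h=0$. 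Your route --- identifying $e\cdot\der\thh(x,T)=e\cdot\der\thh(y,T)=\om^{'}(\ze)$ with $e=(x-y)/\ze$ --- is also viable, but as written it has a loose end: you leave the perpendicular components of $\der\thh$ at $x$ and $y$ ``to be handled'' and speculate that incompressibility is what saves the estimate. No divergence-free structure is needed here: the same one-sided perturbation argument applied in the direction $e^{\perp}$ shows those components vanish, since moving $x$ perpendicular to $x-y$ changes $|x-y|$ only to second order while $\thh(x,T)-\thh(y,T)$ already saturates the constraint $\om(|x-y|)$; hence $\der\thh(x,T)=\der\thh(y,T)=\om^{'}(\ze)e$ exactly, and the estimate reduces to $\om^{'}(\ze)\,e\cdot(\hat{u}(x)-\hat{u}(y))\leq\om^{'}(\ze)\Om(\ze)$. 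The paper's chain-rule formulation delivers exactly this conclusion without any decomposition, which is why it is the standard way to run the [KNV] argument; your version is correct once the perpendicular components are shown to vanish, but it is strictly more work. The nonnegativity claim appended to \eqref{2.1} you dispose of correctly.
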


\begin{proof}\

To prove \eqref{2.1}, we compute
\bea
\hat{u}.\der\thh(x,T) - \hat{u}.\der\thh(y,T) = \frac{d}{dh}_{|h=0} [ \thh(x + hu(x), T) - \thh(y + hu(y), T) ]  \label{computingthediferenceugradientthetahat}
\eea

We have
\bea
\notag
\thh(x + hu(x), T) - \thh(y + hu(y), T) &\leq& \om(x + hu(x) - y - hu(y)) \\
\notag
&& \text{(because $\om$ is preserved by $\thh$ at time $T$)} \\
&\leq& \om(\ze + h|\hat{u}(x) - \hat{u}(y)|)  \label{omeazeplushtimethedifferenceinu}
\eea
and
\bea
|\hat{u}(x) - \hat{u}(y)| \leq \Om(\ze) \label{boundingthedifferenceinubyOmegaze}
\eea
(by \eqref{lemma2.1})

Since $\om$ is increasing, \eqref{omeazeplushtimethedifferenceinu} and \eqref{boundingthedifferenceinubyOmegaze} give
\bea
\thh(x + hu(x)) - \thh(y + hu(y)) \leq \om(\ze + h\Om(\ze))   \label{inequalityonthediffenceofthetaxplushu}
\eea
\eqref{2.1} comes out after differentiation by injecting \eqref{inequalityonthediffenceofthetaxplushu} in \eqref{computingthediferenceugradientthetahat}.

\eqref{2.2} is proved in [KNV].

\eqref{2.3} comes out from assumption \eqref{assumptiontwo}, that $f$ is $\a$-H\"older continuous, with $\a > 0$:

\beaa
\frac{1}{A} [ f(\frac{x}{A},\frac{T}{A}) - f(\frac{y}{A},\frac{T}{A}) ] &\leq& \frac{C_{1}}{A}\frac{| x - y|^{\a}}{A^{\a}} = \frac{C_{1}}{A^{1 + \a}}\ze^{\a}  
\eeaa

\end{proof}

\subsection{Construction of $\om$}\

Let $\delta > 0$ small enough to be chosen later, $\b = \min\{\frac{1}{2}, \a\}$, where $\a > 0$ is defined as in \eqref{assumptiontwo}, and $0 < \ga \leq \frac{\de}{2}$.

For $0 \leq \ze \leq \delta$, let
\bea
\om(\ze) = \ze - \ze^{1 + \b} \label{definitionofomegaforsmallze}
\eea

For $\ze > \delta $, let
\bea
\om^{'}(\ze) = \frac{\ga}{\ze(4 + log(\frac{\ze}{\delta}))}  \label{definitionderivativeofomegaforbigze}
\eea

\begin{remark} \label{remarkontheconditiontomakeomegamodulusofcontinuity}
For $\de$ small enough, and $0 < \ga \leq \frac{\de}{2}$, $\om$ is a modulus of continuity verifying \eqref{prematureassumptiononomegaone}, \eqref{prematureassumptiononomegatwo}, and \eqref{thirdassumptiononomega}.
\end{remark}

\begin{lemma} \label{lemmathatomegaispreservedbythetahat}
Let $x, y \in R^{2}$ be as in \eqref{xyTsuchthattheequalityholds} with $\om$ as defined in \eqref{definitionofomegaforsmallze} and \eqref{definitionderivativeofomegaforbigze}, and let $\ze = |x - y| > 0$. If we choose $\de$ and $\ga$ small enough, with $0 < \ga \leq \frac{\de}{2}$, then
for all $0 < \ze \leq A.D$, where $D$ is the period of $\th$, we have \eqref{differencethetahatstrictlynegative}, i.e.
\beaa
\partial_{t}(\thh(x, T) - \thh(y, T)) < 0 
\eeaa
\end{lemma}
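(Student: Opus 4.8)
The plan is to bound the right-hand side of \eqref{computingthedifferenceoftimederivativesofthetahat}. By \eqref{2.1}, \eqref{2.2} and \eqref{2.3} of Lemma \eqref{lemma2.2}, it is enough to show that for every $\ze \in (0, A.D]$,
\beaa
&& \Om(\ze)\om^{'}(\ze) + \frac{1}{\pi}\int_{0}^{\frac{\ze}{2}}\frac{\om(\ze + 2\eta) + \om(\ze - 2\eta) -2\om(\ze)}{\eta^{2}}d\eta \\
&& + \frac{1}{\pi}\int_{\frac{\ze}{2}}^{\infty}\frac{\om(2\eta + \ze) - \om(2\eta - \ze) -2\om(\ze)}{\eta^{2}}d\eta + \frac{C_{1}}{A^{1+\a}}\ze^{\a} < 0 .
\eeaa
I would fix the parameters in the order: $\b = \min\{\frac{1}{2},\a\}$ is forced by \eqref{assumptiontwo}; then $\de$ is chosen small (small enough, in particular, for Remark \eqref{remarkontheconditiontomakeomegamodulusofcontinuity} to apply); then $\ga$ is chosen small with $\ga \leq \frac{\de}{2}$; and finally $A$ is taken large. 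The inequality is established separately on $0 < \ze \leq \de$ and on $\de < \ze \leq A.D$.

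On $0 < \ze \leq \de$, where $\om(\ze) = \ze - \ze^{1+\b}$: the linear part of $\om$ contributes nothing to the second difference in the first dissipative integral, and since $s \mapsto s^{1+\b}$ is convex the resulting numerator is nonpositive; estimating this convexity deficit from below shows the first integral is $\leq -c_{0}\ze^{\b}$ with $c_{0} = c_{0}(\b) > 0$, while the second integral is $\leq 0$ by concavity of $\om$. For $\Om(\ze)$, splitting the two integrals in \eqref{modulusofcontinuityforuhat} at $\de$, using $\om(\eta) \leq \eta$ on $[0,\de]$, and observing that $\ga \leq \frac{\de}{2}$ keeps $\int_{\de}^{\infty}\om(\eta)\eta^{-2}\,d\eta$ bounded by an absolute constant, gives $\Om(\ze) \leq B\,\ze\,(c_{1} + \log(\de/\ze))$ with $c_{1}$ absolute; since $\om^{'} \leq 1$ and $\ze^{1-\b}(c_{1} + \log(\de/\ze))$ is bounded on $(0,\de]$ by a quantity that tends to $0$ as $\de \to 0$, a small choice of $\de$ forces $\Om(\ze)\om^{'}(\ze) \leq \frac{c_{0}}{2}\ze^{\b}$ there. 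Finally, because $\a \geq \b$ and $\ze \leq \de < 1$ we have $\ze^{\a} \leq \ze^{\b}$, so $\frac{C_{1}}{A^{1+\a}}\ze^{\a} \leq \frac{c_{0}}{4}\ze^{\b}$ as soon as $A^{1+\a} \geq 4C_{1}/c_{0}$; the three terms then sum to at most $-\frac{c_{0}}{4}\ze^{\b} < 0$. It is here that the H\"older hypothesis \eqref{assumptiontwo} is essential: boundedness of $f$ alone only yields the constant $2||f||_{L^{\infty}}/A$, which cannot be absorbed into $\ze^{\b}$ as $\ze \to 0$.

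On $\de < \ze \leq A.D$ the first three terms are handled exactly as in [KNV]: provided $\ga$ lies below a threshold depending only on the absolute constant $B$ of \eqref{modulusofcontinuityforuhat}, an inspection of the [KNV] estimates gives $\Om(\ze)\om^{'}(\ze) + (\text{dissipative integrals}) \leq -\kappa(\ze)$ with $\kappa(\ze) \geq c_{2}\,\om(\ze)/\ze$ for an absolute $c_{2} > 0$ (the negative contribution $-2\om(\ze)$ in the second dissipative integral dominates, while $\Om(\ze)\om^{'}(\ze)$ is smaller by a factor $\sim \ga$). The new term is then controlled by the restriction $\ze \leq A.D$: $\frac{C_{1}}{A^{1+\a}}\ze^{\a} \leq \frac{C_{1}D^{\a}}{A} = \frac{C_{1}D^{1+\a}}{A.D}$. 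Since $\om(\ze)/\ze$ is decreasing, $\kappa(\ze) \geq c_{2}\,\om(A.D)/(A.D)$ on $(\de, A.D]$, so it suffices that $\om(A.D) \geq C_{1}D^{1+\a}/c_{2}$; as $\om(A.D) = \om(\de) + \ga\log\frac{4 + \log(A.D/\de)}{4} \to \infty$ when $A \to \infty$, this holds once $A$ is large in terms of $\de, \ga, C_{1}, D$ and $\a$. Combining the two ranges, and recalling that $A$ was in any case required to be large enough for $\om_{A}$ to be a modulus of continuity of $\th_{0}$, gives \eqref{differencethetahatstrictlynegative} for all $0 < \ze \leq A.D$.

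The step I expect to be the main obstacle is the far range. The dissipation supplies only a margin of size $\sim \om(\ze)/\ze$, which decays as $\ze$ grows, whereas the rescaled force has size $\sim 1/A$ uniformly in $\ze$, so for a fixed $A$ the comparison fails once $\ze$ is large. The point of the modified scaling is that $\thh$ is periodic of period $A.D$, so only separations $\ze \leq A.D$ need to be considered; at the worst case $\ze = A.D$ one compares $\om(A.D)/(A.D) \sim \ga\log\log(A.D)/(A.D)$ against $\sim 1/A$, a comparison that is won by enlarging $A$ because $\om(A.D) \to \infty$. (In this range the force term could equally be bounded by $2||f||_{L^{\infty}}/A$ and the same comparison used; the H\"older regularity of $f$ is genuinely needed only in the near range $\ze \leq \de$.)
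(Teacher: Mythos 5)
Your proposal is correct and follows essentially the same route as the paper: the same split at $\ze = \de$, the same use of the dissipation to produce a term $\leq -c\,\ze^{\b}$ in the near range (where the H\"older continuity of $f$ is exactly what allows the force to be absorbed) and $\leq -c\,\om(\ze)/\ze$ in the far range, and the same final comparison at $\ze = A.D$ exploiting the periodicity of $\thh$ together with $\om(A.D) \to \infty$ as $A \to \infty$. The only cosmetic differences are that you absorb the $\ze\ln(\de/\ze)$ contribution of $\Om(\ze)\om^{'}(\ze)$ directly into $\ze^{\b}$ by shrinking $\de$, and that in the far range you bound the force term by $C_{1}D^{\a}/A$ rather than $2||f||_{L^{\infty}}/A$, which, as you note, is interchangeable.
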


\begin{proof}

\subsubsection{Checking inequality \eqref{differencethetahatstrictlynegative} for $ 0 < \ze \leq \de$}\

Injecting \eqref{modulusofcontinuityforuhat} in \eqref{2.1}, we get
\bea
\hat{u}.\der\thh(x,T) - \hat{u}.\der\thh(y,T) &\leq& B (\int_{0}^{\ze}\frac{\om(\eta)}{\eta}d\eta + \ze\int_{\ze}^{\infty}\frac{\om(\eta)}{\eta^{2}}d\eta) \om^{'}(\ze)  \label{inequalityonhatuhatthetabymodulusofcontinuity}
\eea

From \eqref{definitionofomegaforsmallze}, we have
\beaa
\om(\eta) \leq \eta
\eeaa
Thus,
\bea
\int_{0}^{\ze}\frac{\om(\eta)}{\eta}d\eta \leq \ze \label{controllingintegralofomegaovereta}
\eea

On the other hand,
\begin{eqnarray*}
\int_{\ze}^{\infty}\frac{\om(\eta)}{\eta^{2}}d\eta &=& \int_{\ze}^{\de}\frac{\om(\eta)}{\eta^{2}}d\eta + \int_{\de}^{\infty}\frac{\om(\eta)}{\eta^{2}}d\eta \\
&=& \int_{\ze}^{\de}(\frac{1}{\eta} - \eta^{-1 + \b})d\eta + [\frac{-\om(\eta)}{\eta}]^{\infty}_{\delta} - \int_{\de}^{\infty}\frac{- \om^{'}(\eta)}{\eta}d\eta \\
&& \text{(by integrating by parts in the second integral)} \\
&=& [\ln \eta]^{\delta}_{\ze} -  [\frac{\eta^{\b}}{\b}]^{\delta}_{\ze} + \frac{\om(\delta)}{\delta} + \ga\int_{\de}^{\infty} \frac{1}{\eta^{2}(4 + \ln(\frac{\eta}{\delta}))}d\eta \\
&\leq&  \ln(\frac{\de}{\ze}) +1 + \ga\int_{\de}^{\infty} \frac{1}{\eta^{2}}d\eta \\
&\leq&  \ln(\frac{\de}{\ze}) +1 +  \frac{\ga}{\de}
\end{eqnarray*}

If we choose $\ga \leq \de$, we obtain
\bea
\int_{\ze}^{\infty}\frac{\om(\eta)}{\eta^{2}}d\eta &\leq& 2 +\ln(\frac{\de}{\ze})  \label{controllingintegralofomegaoveretasquared}
\eea

We also have from \eqref{definitionofomegaforsmallze},
\bea
\om^{'}(\ze) \leq 1 \label{boundonderivativeofomegaforzesmall}
\eea

Injecting \eqref{controllingintegralofomegaovereta}, \eqref{controllingintegralofomegaoveretasquared}, and \eqref{boundonderivativeofomegaforzesmall} in \eqref{inequalityonhatuhatthetabymodulusofcontinuity}, we get
\bea
\notag
\hat{u}.\der\thh(x,T) - \hat{u}.\der\thh(y,T)  &\leq&  B[\ze + \ze(2 + \ln(\frac{\de}{\ze})].1 \\
& \leq&  B[3\ze + \ze \ln(\frac{\de}{\ze})]   \label{finalineqalityonthedifferencehatugradienthattheta}
\eea

On the other hand, \eqref{2.2} has two terms and they are both negative due to the concavity of $\om$. Indeed, the first term in \eqref{2.2} is  $$\frac{1}{\pi}\int_{0}^{\frac{\ze}{2}}\frac{\om(\ze + 2\eta) + \om(\ze - 2\eta) -2\om(\ze)}{\eta^{2}}d\eta$$

If we choose $\de$ small enough, then $\om$ is concave. In addition, $\om^{'''}(\ze) > 0$ due to the choice of $\b$. Hence, using the Taylor series, we can estimate
\beaa
\om(\ze + 2\eta) &\leq& \om(\ze) + 2\om^{'}(\ze)\eta  \\
\om(\ze - 2\eta) &\leq& \om(\ze) -2\om^{'}(\ze)\eta + 2\om^{''}(\ze) \eta^{2}
\eeaa

Therefore, 
\bea
\notag
\frac{1}{\pi}\int_{0}^{\frac{\ze}{2}}\frac{\om(\ze + 2\eta) + \om(\ze - 2\eta) -2\om(\ze)}{\eta^{2}}d\eta &\leq& \frac{1}{\pi}\int_{0}^{\frac{\ze}{2}}\frac{2\om(\ze) + 2\om^{''}(\ze)\eta^{2} -2\om(\ze)}{\eta^{2}}d\eta \\
\notag
&\leq& \frac{1}{\pi}\int_{0}^{\frac{\ze}{2}}2\om^{''}(\ze)d\eta \\
\notag
&\leq& \frac{\ze}{\pi}\om^{''}(\ze)  \\
&\leq& -\frac{\b(1 + \b)}{\pi}\ze^{\b} \label{estimatingthefirsttermintheboundonthedifferencehatugraduenthattheta}
\eea

Whereas to the second term in \eqref{2.2},
\beaa
\frac{1}{\pi}\int_{\frac{\ze}{2}}^{\infty}\frac{\om(2\eta + \ze) - \om(2\eta - \ze) -2\om(\ze)}{\eta^{2}}d\eta  
\eeaa
since $\om$ is concave, we have
\beaa
\om(2\eta + \ze) &=& \om(2\eta - \ze + \ze +\ze) \\
&\leq& \om(2\eta - \ze) + \om(\ze +\ze) \\
&\leq& \om(2\eta - \ze) + 2\om(\ze )
\eeaa

Hence,
\bea
\frac{1}{\pi}\int_{\frac{\ze}{2}}^{\infty}\frac{\om(2\eta + \ze) - \om(2\eta - \ze) -2\om(\ze)}{\eta^{2}}d\eta  &\leq& 0  \label{estimatingthesecondtermintheboundonthedifferencehatugraduenthattheta}
\eea

Injecting \eqref{estimatingthefirsttermintheboundonthedifferencehatugraduenthattheta} and \eqref{estimatingthesecondtermintheboundonthedifferencehatugraduenthattheta} in \eqref{2.2}, we obtain
\bea
- [(-\lap)^{\frac{1}{2}}\thh(x,T) - (-\lap)^{\frac{1}{2}}\thh(y,T) ]  &\leq& -\frac{\b(1 + \b)}{\pi}\ze^{\b}   \label{finalinequalityonthefifferenceofthequareofthelappacianofhattheta}
\eea

Finally, injecting \eqref{2.3}, \eqref{finalineqalityonthedifferencehatugradienthattheta}, and \eqref{finalinequalityonthefifferenceofthequareofthelappacianofhattheta} in \eqref{computingthedifferenceoftimederivativesofthetahat}, we obtain for $0 < \ze \leq \delta$
\beaa
\notag
\partial_{t}(\thh(x,T) - \thh(y,T)) &\leq& B [\ze(3 + \ln(\frac{\de}{\ze}))]  -\frac{\b (1+\b)}{\pi}\ze^{\b} + \frac{C_{1}}{A^{1 + \a}}\ze^{\a} \\
\notag
&\leq& 3B\ze + B\ze \ln(\de) - B\ze \ln(\ze) - \frac{\b(1 + \b)}{\pi}\ze^{\b} + \frac{C_{1}}{A^{1 + \a}}\ze^{\a}
\eeaa

Choosing $\de \leq 1$ and $A \geq 1$, we have
\beaa
\ze^{\a} &\leq& \ze^{\b} \\
A^{1+\b} &\leq& A^{1+\a}
\eeaa

Consequently,
\bea
\notag
\partial_{t}(\thh(x,T) - \thh(y,T))&\leq& B(3\ze + \ze \ln(\de)) - B\ze \ln(\ze) - \ze^{\b}(\frac{\b(1 + \b)}{\pi} - \frac{C_{1}}{A^{1 + \b}})  \\ \label{lastinequalityonthedifferenceoftimederivativesofthetahatforzesmall}
\eea

Choosing $\de$ small enough, and $A$ large enough depending on $C_{1}$ and on $\b$, and therefore on $f$, then \eqref{lastinequalityonthedifferenceoftimederivativesofthetahatforzesmall} would lead to
\beaa
\partial_{t}(\thh(x,T) - \thh(y,T))  &<& 0
\eeaa

\subsubsection{Checking inequality \eqref{differencethetahatstrictlynegative} for $\de \leq \ze \leq A.D$, where $D$ is the period of $\th$}\

From \eqref{computingthedifferenceoftimederivativesofthetahat}, \eqref{modulusofcontinuityforuhat}, \eqref{2.1} and \eqref{2.2}, we have 
\bea
\notag
\partial_{t}(\thh(x,T) - \thh(y,T)) &\leq& \om^{'}(\ze)B[(\int_{0}^{\ze}\frac{\om(\eta)}{\eta}d\eta + \ze\int_{\ze}^{\infty}\frac{\om(\eta)}{\eta^{2}}d\eta)] \\
\notag
&& + \frac{1}{\pi}\int_{0}^{\frac{\ze}{2}}\frac{\om(\ze + 2\eta) + \om(\ze - 2\eta) -2\om(\ze)}{\eta^{2}}d\eta \\
\notag
&& + \frac{1}{\pi}\int_{\frac{\ze}{2}}^{\infty}\frac{\om(2\eta + \ze) - \om(2\eta - \ze) -2\om(\ze)}{\eta^{2}}d\eta \\
&& + \frac{1}{A}(f(\frac{x}{A},\frac{T}{A}) - f(\frac{y}{A},\frac{T}{A}))  \label{anestimateonthetimederivativeofthedifferenceofthetahat}
\eea

We have
\bea
\frac{1}{A}(f(\frac{x}{A},\frac{T}{A}) - f(\frac{y}{A},\frac{T}{A})) \leq \frac{2}{A}||f||_{L^{\infty}}  \label{easyboundonthetermcontainingfsincefisboundedinspaceandtime}
\eea
(from assumption \eqref{assumptionone} on the force).

Whereas to the term
\beaa
\frac{1}{\pi}\int_{0}^{\frac{\ze}{2}}\frac{\om(\ze + 2\eta) + \om(\ze - 2\eta) -2\om(\ze)}{\eta^{2}}d\eta  
\eeaa

since $\om$ is concave, using the Taylor series, we can estimate
\beaa
\om(\ze - 2\eta) &\leq& \om(\ze) - 2\eta \om^{'}(\ze)\\
\om(\ze + 2\eta)  &\leq& \om(\ze) + 2\eta \om^{'}(\ze) \\
\eeaa

Therefore,
\bea
\frac{1}{\pi}\int_{0}^{\frac{\ze}{2}}\frac{\om(\ze + 2\eta) + \om(\ze - 2\eta) -2\om(\ze)}{\eta^{2}}d\eta  \leq 0  \label{non-positivityofoneoftheterms}
\eea

Now, we want to evaluate the term $$\frac{1}{\pi}\int_{\frac{\ze}{2}}^{\infty}\frac{\om(2\eta + \ze) - \om(2\eta - \ze) -2\om(\ze)}{\eta^{2}}d\eta$$
We have
\beaa
\om(2\eta + \ze) &=& \om(2\eta - \ze + 2\ze) \\
&\leq& \om(2\eta - \ze) + \om(2\ze)
\eeaa
(by concavity).

Hence,
\bea
\frac{1}{\pi}\int_{\frac{\ze}{2}}^{\infty}\frac{\om(2\eta + \ze) - \om(2\eta - \ze) -2\om(\ze)}{\eta^{2}}d\eta  \leq \frac{1}{\pi}\int_{\frac{\ze}{2}}^{\infty}\frac{\om(2\ze) - 2\om(\ze)}{\eta^{2}}d\eta \label{inequalityonthesecondtermafterusingconcavity}
\eea

Since $\om$ is concave, we also have
\bea
\notag
\om(2\ze) &\leq& \om(\ze) + \ze\om^{'}(\ze) \\
\notag
&\leq& \om(\ze) + \frac{\ga}{4 + \ln(\frac{\ze}{\de})} \\
&\leq& \om(\ze) + \frac{\ga}{4}  \label{inequalityofomegatwoze}
\eea

If we choose $\ga < \frac{\de}{2}$, \eqref{inequalityofomegatwoze} will lead to 
\bea
\om(2\ze)  &\leq& \om(\ze) + \frac{\de}{8} \label{inequalityonomegatwozeafterchoosinggasmall}
\eea

If we choose $\de$ small enough, we will have
\bea
\de^{1+\b} \leq \frac{\de}{2}  \label{smallnessondecondition}
\eea
then, from \eqref{definitionofomegaforsmallze} and \eqref{definitionderivativeofomegaforbigze} we will get
\bea
\frac{\de}{2} \leq \om(\de) \leq \om(\ze) \label{inequalityondeovertwo}
\eea
Injecting \eqref{inequalityondeovertwo} in \eqref{inequalityonomegatwozeafterchoosinggasmall}, we obtain 
\beaa
\om(2\ze) \leq \frac{3}{2}\om(\ze)   \label{verylastinequalityonomega2ze}
\eeaa

Consequently,
\bea
\frac{1}{\pi}\int_{\frac{\ze}{2}}^{\infty}\frac{\om(2\eta + \ze) - \om(2\eta - \ze) -2\om(\ze)}{\eta^{2}}d\eta    \leq -\frac{1}{2\pi}\int_{\frac{\ze}{2}}^{\infty}\frac{\om(\ze)}{\eta^{2}}d\eta =  -\frac{1}{\pi}\frac{\om(\ze)}{\ze}  \label{estimateoftheothernon-positiveterms}
\eea
(from \eqref{inequalityonthesecondtermafterusingconcavity}).

Now, we would want to evaluate the term $$(\int_{0}^{\ze}\frac{\om(\eta)}{\eta}d\eta + \ze\int_{\ze}^{\infty}\frac{\om(\eta)}{\eta^{2}}d\eta)$$

We have,
\begin{eqnarray*}
\int_{0}^{\ze}\frac{\om(\eta)}{\eta}d\eta &\leq& \int_{0}^{\de}\frac{\om(\eta)}{\eta}d\eta + \int_{\de}^{\ze}\frac{\om(\eta)}{\eta}d\eta  \\
&\leq& \de + \om(\ze)\ln(\frac{\ze}{\de}) 
\end{eqnarray*}

If we choose $\de$ small enough as before in \eqref{smallnessondecondition}, so that $\om(\ze) \geq \om(\de) \geq \frac{\de}{2}$, we obtain
\bea
\notag
\int_{0}^{\ze}\frac{\om(\eta)}{\eta}d\eta  &\leq& 2\om(\ze) + \om(\ze)\ln(\frac{\ze}{\de}) \\
&\leq&  \om(\ze)( 2 + \ln(\frac{\ze}{\de}))  \label{estimateonintegralofomegaetaovereta}
\eea

On the other hand, integrating by parts and using \eqref{definitionderivativeofomegaforbigze}, we can evaluate
\beaa
\int_{\ze}^{\infty}\frac{\om(\eta)}{\eta^{2}}d\eta &=& \frac{\om(\ze)}{\ze} + \ga\int_{\ze}^{\infty} \frac{1}{\eta^{2}(4 + \ln(\frac{\eta}{\delta}))}d\eta \\
&\leq& \frac{\om(\ze)}{\ze} + \frac{\ga}{\ze} 
\eeaa

Consequently, if we choose $\ga \leq \frac{\de}{2}$, with $\de$ small enough as in \eqref{smallnessondecondition}, then from \eqref{inequalityondeovertwo} we get
\bea
\int_{\ze}^{\infty}\frac{\om(\eta)}{\eta^{2}}d\eta &\leq&  2\frac{\om(\ze)}{\ze} \label{inequalityonintegralomegaetaoveretasquared}
\eea

Hence, from \eqref{estimateonintegralofomegaetaovereta} and \eqref{inequalityonintegralomegaetaoveretasquared}, we get
\bea
\notag
(\int_{0}^{\ze}\frac{\om(\eta)}{\eta}d\eta + \ze\int_{\ze}^{\infty}\frac{\om(\eta)}{\eta^{2}}d\eta) &\leq&   \om(\ze)( 2 + \ln(\frac{\ze}{\de}))  + 2\om(\ze)  \\
&\leq& \om(\ze)( 4 + \ln(\frac{\ze}{\de}))      \label{anotherestimateonthemodulusofcontinuityofuhatgradientthetahat}
\eea

Finally, injecting \eqref{easyboundonthetermcontainingfsincefisboundedinspaceandtime}, \eqref{non-positivityofoneoftheterms}, \eqref{estimateoftheothernon-positiveterms}, and \eqref{anotherestimateonthemodulusofcontinuityofuhatgradientthetahat} in \eqref{anestimateonthetimederivativeofthedifferenceofthetahat}, we obtain
\begin{eqnarray*}
\partial_{t}(\thh(x,T) - \thh(y,T)) &\leq&  B\om(\ze)(4 + \ln(\frac{\ze}{\de}) )\om^{'}(\ze) - \frac{1}{\pi}\frac{\om(\ze)}{\ze} + 2\frac{||f||_{L^{\infty}}}{A} 
\end{eqnarray*}

Therefore, from \eqref{definitionderivativeofomegaforbigze} we have
\begin{eqnarray*}
\partial_{t}(\thh(x,T) - \thh(y,T)) &\leq& B\ga\frac{\om(\ze)}{\ze} - \frac{1}{\pi}\frac{\om(\ze)}{\ze} + 2\frac{||f||_{L^{\infty}}}{A} \\
&\leq& \frac{\om(\ze)}{\ze}(  B\ga - \frac{1}{\pi}) + 2\frac{||f||_{L^{\infty}}}{A} 
\end{eqnarray*}

If we choose $\ga$ small enough, we get $$B\ga - \frac{1}{\pi} < 0$$
then, we get for all $\de \leq \ze \leq A.D$, where $D$ is the period of $\th$,
\beaa
\partial_{t}(\thh(x,T) - \thh(y,T)) \leq \frac{\om(A.D)}{A.D}(  B\ga - \frac{1}{\pi}) + 2\frac{||f||_{L^{\infty}}}{A} 
\eeaa

Since $\om$ is increasing, we can choose $A$ large enough depending on $D$ and $||f||_{L^{\infty}}$, such that $$\frac{\om(A.D)}{A.D}(  B\ga - \frac{1}{\pi}) + 2\frac{||f||_{L^{\infty}}}{A} < 0 $$

\end{proof}

Remark \eqref{remarkontheconditiontomakeomegamodulusofcontinuity} and lemma \eqref{lemmathatomegaispreservedbythetahat} show that for $\de$ and $\ga$ chosen small enough, with $0 < \ga \leq \frac{\de}{2}$, $\om$ is preserved by $\thh$ for $0 \leq \ze \leq A.D$, where $D$ is the period of $\th$, for all time $t$. Since $\th$ is periodic of period $D$ depending on the period of $\th_{0}$ and of $f$, then $\thh$ is periodic of period $A.D$, and since $\om$ is increasing, we have by then that for all $\ze \geq A.D$ and for all time $t$,
\beaa
\thh(x, t) - \thh(y, t)  \leq \om(\ze)
\eeaa
Therefore, $\om_{A}$ is preserved by $\th$ for all time. Consequently, from \eqref{desiredestimate} we have
\beaa
||\der\th||_{L^{\infty}} \leq A\om^{'}(0) \leq A
\eeaa
where $A$ depends only on $||f||_{L^{\infty}}$, on $C_{1}$ and $\b = \min\{\frac{1}{2}, \a\}$, on $||\der\th_{0}||_{L^{\infty}}$, on the period of $\th_{0}$, and on the period $D$ of $\th$ (which is given by the period of $\th_{0}$ and the period of $f$). If $A$ is finite, this gives that local solutions of \eqref{1.1} can be extended globally in time.\\

\end{document}